\newtheorem{theorem}{Theorem}
\theoremstyle{plain}
\newtheorem{acknowledgement}{Acknowledgement}
\newtheorem{corollary}{Corollary}
\newtheorem{remark}{Remark}
\numberwithin{equation}{section}
\theoremstyle{theorem}
\begin{document}
\title[Cauchy Polynomials with $q$ Parameter in Terms of $r$-Whitney Numbers]{
Explicit Formulae of Cauchy Polynomials with a $q$ Parameter in Terms of $r$-Whitney Numbers}
\author{F. A. Shiha}
\address{Department of Mathematics, Faculty of Science,
Mansoura University, 35516 Mansoura, EGYPT.}
\email{fshiha@yahoo.com, fshiha@mans.edu.eg}

\begin{abstract}
The Cauchy  polynomials with a $q$ parameter were recently defined, and several arithmetical properties were studied. In this paper, we establish explicit formulae for computing the Cauchy  polynomials with a $q$ parameter in terms of $r$-Whitney numbers of the first kind. We also obtain several properties and combinatorial identities.
\end{abstract}

\maketitle

\bigskip AMS (2010) Subject Classification: 05A15, 05A19, 11B73, 11B75.

Key Words. Cauchy numbers and polynomials, $r$-Whitney numbers, Stirling numbers.

\section{Introduction}
\bigskip  The Cauchy polynomials of the first kind $c_n(z)$ are defined by
\begin{equation}
c_n(z)=\int_0^1 (x-z)_n \, dx,
\end{equation}
and the Cauchy polynomials of the second kind $\hat{c}_n(z)$ are defined by
\begin{equation}
\hat{c}_n(z)=\int_0^1 (-x+z)_n \, dx,
\end{equation}
where $(y)_n=\prod_{i=0}^{n-1}(y-i)$ is the falling factorial with $(y)_0=1$. The exponential generating function of these polynomials are
\begin{equation}
\sum_{n=0}^{\infty}c_n(z)\,\frac{t^n}{n!}=\frac{t}{(1+t)^z\,\ln(1+t)}.
\end{equation}
\begin{equation}
\sum_{n=0}^{\infty} \hat{c}_n(z)\,\frac{t^n}{n!}=\frac{t(1+t)^z}{(1+t)\,\ln(1+t)}.
\end{equation}
(see \cite{kom13, kam2013}). When $z=0$, $c_n(0)=c_n$ and $\hat{c}_n(0)=\hat{c}_n$ are the Cauchy numbers of the first and second kind (see \cite{comtet74, mer06, zhao09, kom013}).

Recently \cite{mezo16} obtained a representation of the integer values of Cauchy polynomials in terms of
$r$-Stirling numbers of the first kind $s_r(n,k)$ \cite{broder84}. For all integers $n, r \geq 0$,
\begin{equation}\label{E:c1m}
c_n(r)=\sum_{k=0}^n s_r(n+r,k+r)\,\frac{1}{k+1},
\end{equation}
\begin{equation}\label{E:c2m}
\hat{c}_n(-r)=\sum_{k=0}^n (-1)^k \, s_r(n+r,k+r)\,\frac{1}{k+1}.
\end{equation}
Given variables $y$ and $m$ and a positive integer $k$, define the generalized rising and falling
factorials of order $k$ with increment $m$ by
 \begin{equation*}
 [y|m]_k=\prod_{j=0}^{k-1}(y+jm),\qquad [y|m]_0=1
\end{equation*}
 \begin{equation*}
 (y|m)_k=\prod_{j=0}^{k-1}(y-jm),\qquad (y|m)_0=1.
\end{equation*}
Komatsu \cite{kom2013} introduced the Poly-Cauchy polynomials and numbers with a $q$ parameter, and the Cauchy polynomials and numbers with a $q$ parameter as special cases.

Let $q$ be a real number with $q\neq 0$, Komatsu \cite{kom2013} defined the Cauchy polynomials with a $q$ parameter of the first kind $c_n^q(z)$ by
\begin{equation}\label{E:komc1}
c_n^q(z)=\int_{0}^{1}(x-z|q)_n \, dx
\end{equation}
and the Cauchy polynomials with a $q$ parameter of the second kind $\hat{c}_n^q(z)$ by
\begin{equation}\label{E:komc2}
\hat{c}_n^q(z)=\int_{0}^{1}(-x+z|q)_n \, dx.
\end{equation}
The exponential generating functions are
\begin{equation}
\sum_{n=0}^{\infty}c_n^q(z)\frac{t^n}{n!}=(1+qt)^{\frac{-z}{q}}\sum_{k=0}^{\infty}\left(\frac{\ln(1+qt)}{q}\right)^k\:\frac{1}{k!}\:\frac{1}{k+1},
\end{equation}
\begin{equation}
\sum_{n=0}^{\infty}\hat{c}_n^q(z)\frac{t^n}{n!}=(1+qt)^{\frac{z}{q}}\sum_{k=0}^{\infty}\left(-\,\frac{\ln(1+qt)}{q}\right)^k\:\frac{1}{k!}\:\frac{1}{k+1}.
\end{equation}
If $z=0$, then $c_n^q(0)=c_n^q$ and $\hat{c}_n^q(0)=\hat{c}_n^q$ are the Cauchy numbers with $q$ parameter of the first and second kind, respectively.
If $q=1$, then $c_n^1(z)=c_n(z)$ and $\hat{c}_n^1(z)=\hat{c}_n(z)$.

The $r$-Whitney numbers of the first and second kind were introduced by Mez\"o \cite{mezo2010}.  For non-negative integers $n$ and $k$ with $0\leq k\leq n$, let $w(n,k)=w_{q,r}(n,k)$ denote the $r$-Whitney numbers of the first kind, which are defined by
\begin{equation}\label{E:wi1}
q^n(x)_n=\sum_{k=0}^n\:w(n,k)\:(qx+r)^k.
\end{equation}
Let $W(n,k)=W_{q,r}(n,k)$ denote the $r$-Whitney numbers of the second kind, which are defined by
\begin{equation}\label{E:wi2}
(qx+r)^n=\sum_{k=0}^{n}q^k\:W(n,k)\:(x)_k.
\end{equation}
Usually $r$ is taken to be a non-negative integer and $q$ a positive integer, but both may
also be regarded as real numbers \cite{shat2017}. The exponential generating function of $w(n,k)$ is given by \cite{mezo2010}
\begin{equation}\label{E:ewi1}
\sum_{n\geq k}w(n,k)\frac {t^n}{n!}=(1+qt)^{\frac{-r}{q}}\left(\frac{\ln(1+qt)}{q}\right)^k\:\frac{1}{k!},
\end{equation}

\section{\protect\bigskip Basic results}

Replace $x$ by $\frac{x-r}{q}$ in \eqref{E:wi1}, then the $r$-Whitney numbers of the first kind $w(n,k)$ are given by
\begin{equation}\label{E:w11}
(x-r|q)_n=\prod_{j=0}^{n-1}(x-r-jq)=\sum_{k=0}^{n}w(n,k)\:x^k, \qquad q \neq 0,
\end{equation}
Using \eqref{E:komc1}, we get the following theorem.
\begin{theorem}
The Cauchy polynomials with $q$ parameter of the first kind $c_n^q(r)$, $q \neq 0$ can be written explicitly as
\begin{equation}\label{D:cau1}
c_n^q(r)=\sum_{k=0}^{n}w(n,k)\:\frac{1}{k+1}.
\end{equation}
\end{theorem}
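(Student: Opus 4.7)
The plan is to substitute the polynomial expansion of the generalized falling factorial into the integral definition of $c_n^q(r)$ and then swap sum and integral. Concretely, I would start from the definition \eqref{E:komc1},
\begin{equation*}
c_n^q(r) = \int_0^1 (x-r|q)_n \, dx,
\end{equation*}
and invoke equation \eqref{E:w11}, which expands $(x-r|q)_n$ as a polynomial in $x$ whose coefficients are precisely the $r$-Whitney numbers of the first kind $w(n,k)$.

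Next, I would exchange the (finite) sum and the integral, which is legitimate since we are integrating a polynomial of degree $n$, and then evaluate $\int_0^1 x^k\, dx = 1/(k+1)$ termwise. This yields the claimed identity
\begin{equation*}
c_n^q(r) = \sum_{k=0}^{n} w(n,k)\,\frac{1}{k+1}
\end{equation*}
in one line of computation.

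There is no real obstacle here: the proof is a direct substitution combined with the elementary integral of $x^k$ on $[0,1]$. The only point worth flagging is that the change of variable producing \eqref{E:w11} from \eqref{E:wi1} requires $q \neq 0$, which matches the hypothesis of the theorem. Everything else is formal, so the write-up will be short.
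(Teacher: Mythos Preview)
Your proposal is correct and matches the paper's own argument: the paper likewise derives \eqref{E:w11} from \eqref{E:wi1} by the substitution $x\mapsto (x-r)/q$ and then integrates termwise via \eqref{E:komc1} to obtain \eqref{D:cau1}. There is nothing to add.
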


The first few polynomials are

$c_0^q(r)=1$,

$c_1^q(r)=-r+\frac{1}{2}$,

$c_2^q(r)=r^2+(q-1)r-\frac{1}{2}q+\frac{1}{3}$,

$c_3^q(r)=-r^3-\frac{3}{2}(2q-1)r^2+(-2q^2+3q-1)r+q^2-q+\frac{1}{4}$,

$c_4^q(r)=r^4+(6q-2)r^3+(11q^2-9q+2)r^2+(6q^3-11q^2+6q-1)r-3q^3+\frac{11}{3}q^2-\frac{3}{2}q+\frac{1}{5}$.

\begin{remark}
If $r=0$, then $c_{n}^q(0)=c_{n}^q$ are the Cauchy numbers with $q$ parameter of the first kind \cite{kom2013}
\[
c_{n}^q=\int_{0}^{1}(x|q)_n\,dx=\sum_{k=0}^n q^{n-k}\,s(n,k)\,\frac{1}{k+1},
\]
where $s(n,k)$ are the Stirling numbers of the first kind.

If $q=1$, we have $c_n^1(r)=c_n(r)$ and $w_{1,r}(n,k)$ are reduced to $s_r(n+r,k+r)$, and hence we obtain the explicit formula \eqref{E:c1m}.
\end{remark}

From \eqref{E:ewi1}, we can easily derive the exponential generating function of $c_n^q(r)$  as follows:
\begin{equation*}
 \begin{split}
 \sum_{n=0}^{\infty}c_n^q(r)\frac{t^n}{n!}& =\sum_{n=0}^{\infty}\,\sum_{k=0}^{n}w(n,k)\frac{1}{k+1}\:\frac{t^n}{n!}\\
 &=\sum_{k=0}^{\infty}\, \sum_{n=k}^{\infty}w(n,k)\frac{t^n}{n!}\:\frac{1}{k+1}\\
 &=(1+qt)^{\frac{-r}{q}} \sum_{k=0}^{\infty}\left(\frac{\ln(1+qt)}{q}\right)^k\:\frac{1}{k!}\:\frac{1}{k+1}\\
 &=(1+qt)^{\frac{-r}{q}}\sum_{k=0}^{\infty}\left(\frac{\ln(1+qt)}{q}\right)^{k+1}\:\frac{1}{(k+1)!}\:\frac{q}{\ln(1+qt)}\\
 &=\frac{q(1+qt)^{\frac{-r}{q}}}{\ln(1+qt)}\sum_{k=1}^{\infty}\left(\frac{\ln(1+qt)}{q}\right)^{k}\:\frac{1}{k!}\\
 &=\frac{q(1+qt)^{\frac{-r}{q}}}{\ln(1+qt)}\left((1+qt)^{\frac{1}{q}}-1 \right).
 \end{split}
 \end{equation*}

When $r=0$, we get the exponential generating function of $c_n^q$
\begin{equation*}
 \sum_{n=0}^{\infty}c_n^q \,\frac{t^n}{n!}=\frac{q}{\ln(1+qt)}\left((1+qt)^{\frac{1}{q}}-1 \right)
 \end{equation*}
According to \eqref{E:w11},
\begin{equation}
(-x-r|q)_n=\prod_{j=0}^{n-1}(-x-r-jq)=\sum_{k=0}^{n}w(n,k)\,(-1)^k\,x^k, \qquad q \neq 0.
\end{equation}
Using \eqref{E:komc1}, we get the following theorem.

\begin{theorem}
The Cauchy polynomials with $q$ parameter of the second kind $\hat{c}_n^q(r)$, $q \neq 0$  can be written explicitly as
\begin{equation}\label{D:cau2}
\hat{c}_n^q(-r)=\sum_{k=0}^{n}(-1)^k\,w(n,k)\:\frac{1}{k+1}.
\end{equation}
\end{theorem}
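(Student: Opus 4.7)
The plan is to mirror the proof of the first theorem: combine the polynomial identity displayed immediately before the statement with the integral definition \eqref{E:komc2} of $\hat{c}_n^q$, and integrate term by term.

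First I would specialize $z = -r$ in \eqref{E:komc2} to obtain
\[
\hat{c}_n^q(-r) = \int_0^1 (-x - r \mid q)_n \, dx.
\]
Next, I would invoke the identity
\[
(-x - r \mid q)_n = \sum_{k=0}^{n} w(n,k)\,(-1)^k\, x^k,
\]
which was just established by substituting $-x$ for $x$ in \eqref{E:w11}. This converts the integrand into a polynomial in $x$ whose coefficients are the signed $r$-Whitney numbers.

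Finally, since the sum is finite, I would interchange integration and summation and use $\int_0^1 x^k \, dx = 1/(k+1)$ to obtain
\[
\hat{c}_n^q(-r) = \sum_{k=0}^{n}(-1)^k\, w(n,k) \int_0^1 x^k\,dx = \sum_{k=0}^{n}(-1)^k\, w(n,k)\,\frac{1}{k+1},
\]
which is precisely the claimed formula.

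There is really no obstacle here: the hardest conceptual step, namely establishing the polynomial expansion of $(-x - r \mid q)_n$ in the $r$-Whitney basis, has already been done in the displayed equation preceding the theorem, and the rest is a single line of elementary integration. The proof is therefore completely parallel to the proof of \eqref{D:cau1}, with the only new ingredient being the sign factor $(-1)^k$ that arises from replacing $x$ by $-x$ in \eqref{E:w11}.
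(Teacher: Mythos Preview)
Your proposal is correct and follows exactly the paper's own argument: the paper derives the identity $(-x-r\mid q)_n=\sum_{k=0}^{n}(-1)^k\,w(n,k)\,x^k$ immediately before the theorem and then simply integrates using the definition of $\hat{c}_n^q$, which is precisely what you do. (Indeed, you even correct a small slip in the paper, which cites \eqref{E:komc1} rather than \eqref{E:komc2} before the theorem.)
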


The first few polynomials are

$\hat{c}_0^q(r)=1$,

$\hat{c}_1^q(r)=r-\frac{1}{2}$,

$\hat{c}_2^q(r)=r^2-(q+1)r+\frac{1}{2}q+\frac{1}{3}$,

$\hat{c}_3^q(r)=r^3-\frac{3}{2}(2q+1)r^2+(2q^2+3q+1)r-q^2-q-\frac{1}{4}$,

$\hat{c}_4^q(r)=r^4-(6q+2)r^3+(11q^2+9q+2)r^2-(6q^3+11q^2+6q+1)r+3q^3+\frac{11}{3}q^2+\frac{3}{2}q+\frac{1}{5}$.

\begin{remark}
If $r=0$, then $\hat{c}_{n}^q(0)=\hat{c}_{n}^q$ are the Cauchy numbers with $q$ parameter of the second kind \cite{kom2013}
\[
\hat{c}_{n}^q=\int_{0}^{1}(-x|q)_n\,dx=\sum_{k=0}^n q^{n-k}\,s(n,k)\,\frac{(-1)^k}{k+1},
\]
\end{remark}
 Similarly, we can obtain the exponential generating function of $\hat{c}_n^q(r)$:
 \begin{equation}
 \begin{split}
\sum_{n=0}^{\infty}\hat{c}_n^q(r)\frac{t^n}{n!}&=(1+qt)^{\frac{r}{q}}\sum_{k=0}^{\infty}\left(-\,\frac{\ln(1+qt)}{q}\right)^k\:\frac{1}{k!}\:\frac{1}{k+1}\\
&=\frac{q(1+qt)^{\frac{r}{q}}}{\ln(1+qt)}\left(1-(1+qt)^{\frac{-1}{q}} \right).
\end{split}
\end{equation}
And
\begin{equation}
\sum_{n=0}^{\infty}\hat{c}_n^q\frac{t^n}{n!}=\frac{q}{\ln(1+qt)}\left(1-(1+qt)^{\frac{-1}{q}} \right).
\end{equation}
Replace $x$ by $\frac{x-r}{q}$ in \eqref{E:wi2}, then the $r$-Whitney numbers of the second kind $W(n,k)$ are given by
\begin{equation}
x^n=\sum_{k=0}^{n}W(n,k)(x-r|q)_k=\sum_{k=0}^{n}W(n,k)\prod_{j=0}^{k-1}(x-r-jq), \qquad q \neq 0.
\end{equation}
Thus, the relation between $c_n^{q}(r)$, $\hat{c}_n^{q}(r)$ and $W(n,k)$ can be obtained as follows:
\begin{equation}\label{E:Wc}
\sum_{k=0}^n W(n,k)\:c_k^{q}(r)=\int_{0}^{1}\sum_{k=0}^n\:W(n,k)(x-r|q)_k\:dx=\int_{0}^{1}x^n\:dx=\frac{1}{n+1}
\end{equation}
\begin{equation}\label{E:Wc2}
\sum_{k=0}^n W(n,k)\:\hat{c}_k^{q}(-r)=\int_{0}^{1}\sum_{k=0}^n\:W(n,k)(-x-r|q)_k\:dx=\int_{0}^{1}(-1)^n\,x^n\:dx=\frac{(-1)^n}{n+1}
\end{equation}
Cheon et al. \cite{cheon12} gave the following representation of $w(n,k)$ in terms of $s(n,k)$
\[
w(n,k)=\sum_{i=k}^n\:\binom{n}{i}\:(-1)^{n-i}\:q^{i-k}\:[r|q]_{n-i}\:s(i,k).
\]
Hence,

\begin{corollary}
The Cauchy polynomials $c_n^q(r)$ can be computed by using $s(n,k)$ as follows:
\begin{equation}
 \begin{split}
 c_n^q(r)& =\sum_{k=0}^n\:\sum_{i=k}^n \binom{n}{i}\:(-1)^{n-i}\:q^{i-k}\:[r|q]_{n-i}\:s(i,k)\:\frac{1}{k+1}\\
 &=\sum_{i=0}^n\:\sum_{k=0}^i \binom{n}{i}\:(-1)^{n-i}\:q^{i-k}\:[r|q]_{n-i}\:s(i,k)\:\frac{1}{k+1}.
 \end{split}
 \end{equation}
When $q=1$, we obtain the identity
\begin{equation}
c_n(r)=\sum_{i=0}^n\:\binom{n}{i}\:(-1)^{n-i}\:[r|1]_{n-i}c_i.
\end{equation}
\end{corollary}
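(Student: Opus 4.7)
The plan is to prove this by direct substitution: plug the Cheon--Krot--Shattuck representation of $w(n,k)$ into the formula $c_n^q(r)=\sum_{k=0}^{n}w(n,k)\,\frac{1}{k+1}$ established in Theorem 1 of this paper. Since the Cheon et al.\ identity is quoted immediately before the corollary, no independent work on $w(n,k)$ is required; the derivation is a bookkeeping exercise.

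For the first equality I will write
\[
c_n^q(r)=\sum_{k=0}^{n}\frac{w(n,k)}{k+1}=\sum_{k=0}^{n}\frac{1}{k+1}\sum_{i=k}^{n}\binom{n}{i}(-1)^{n-i}q^{i-k}[r|q]_{n-i}s(i,k),
\]
which is already the first of the two displayed forms. The second form follows by swapping the order of the double sum: the index set $\{(i,k):0\le k\le i\le n\}$ is the same whether we iterate $k$ first (with $k\le i\le n$) or $i$ first (with $0\le k\le i$), so exchanging the summations and pulling $\frac{1}{k+1}$ inside the inner sum yields the stated identity.

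For the special case $q=1$, I would first substitute $q=1$ so that $q^{i-k}=1$ and $c_n^1(r)=c_n(r)$, and then factor out of the inner sum all quantities that do not depend on $k$:
\[
c_n(r)=\sum_{i=0}^{n}\binom{n}{i}(-1)^{n-i}[r|1]_{n-i}\left(\sum_{k=0}^{i}\frac{s(i,k)}{k+1}\right).
\]
The inner sum is precisely $c_i$: this is the $q=1$, $r=0$ instance of the Remark following Theorem 1, namely $c_i=\sum_{k=0}^{i}s(i,k)\,\frac{1}{k+1}$. Substituting this back gives the claimed identity
\[
c_n(r)=\sum_{i=0}^{n}\binom{n}{i}(-1)^{n-i}[r|1]_{n-i}\,c_i.
\]

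There is no real obstacle here; the only step requiring care is justifying the interchange of summations (finite sums, so it is automatic) and recognizing the inner sum in the $q=1$ reduction as $c_i$. The proof is essentially a two-line substitution plus a reordering of a finite double sum.
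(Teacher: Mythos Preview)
Your proposal is correct and follows exactly the route the paper intends: the paper does not spell out a proof at all but simply writes ``Hence,'' after quoting the Cheon--Jung representation of $w(n,k)$, so the corollary is meant to be read as the direct substitution into Theorem~1 that you carry out, followed by the same interchange of finite sums and the recognition of $\sum_{k=0}^{i}s(i,k)/(k+1)=c_i$ in the $q=1$ case.
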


The $r$-Whitney numbers $w_{q,r}(n,k)$ satisfy the following identity \cite{cheon12}.
\begin{equation}
w_{q,r+s}(n,k)=\sum_{j=k}^n (-1)^{n-j}\,\binom{n}{j}\,[r|q]_{n-j}\,w_{q,s}(j,k),
\end{equation}
hence, we obtain the following theorem.

\begin{theorem}
For $n \geq 0$, we have
\begin{equation}
c_n^q(r+s)=\sum_{j=0}^n (-1)^{n-j}\,\binom{n}{j}\,[r|q]_{n-j}\,c_j^q(s).
\end{equation}
\end{theorem}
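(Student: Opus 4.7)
The plan is to chain together two identities: Theorem 1 (the formula $c_n^q(r)=\sum_{k=0}^{n} w_{q,r}(n,k)/(k+1)$) and the given recursion for $w_{q,r+s}(n,k)$ in terms of $w_{q,s}(j,k)$. The whole proof should fit in a few lines of manipulation, with no analytic or combinatorial obstacle.

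First I would write $c_n^q(r+s)$ using the $r$-Whitney expansion from Theorem 1 but applied at the parameter $r+s$:
\[
c_n^q(r+s)=\sum_{k=0}^{n} w_{q,r+s}(n,k)\,\frac{1}{k+1}.
\]
Then I would substitute the stated identity
\[
w_{q,r+s}(n,k)=\sum_{j=k}^{n}(-1)^{n-j}\binom{n}{j}[r|q]_{n-j}\,w_{q,s}(j,k)
\]
into this sum, yielding a double sum over $0\le k\le j\le n$.

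Next I would swap the order of summation, pulling the $j$-sum outside. Since $[r|q]_{n-j}$, $\binom{n}{j}$, and $(-1)^{n-j}$ depend only on $j$, they come out of the inner sum:
\[
c_n^q(r+s)=\sum_{j=0}^{n}(-1)^{n-j}\binom{n}{j}[r|q]_{n-j}\sum_{k=0}^{j}w_{q,s}(j,k)\,\frac{1}{k+1}.
\]
Finally I would recognize the inner sum, by Theorem 1 applied at parameter $s$, as exactly $c_j^q(s)$, which gives the claimed formula.

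The only thing worth checking is that the index swap is valid (the region $\{(k,j):0\le k\le j\le n\}$ is symmetric under the exchange) and that the outer sum can start at $j=0$ rather than $j=k$ because the sum $\sum_{k=0}^{j}$ is vacuous-or-well-defined for every $j\in\{0,\dots,n\}$. There is no real obstacle; the result is essentially a direct transfer of the known $r$-Whitney shift identity through the linear functional $\sum_k (\cdot)/(k+1)$ that defines the Cauchy polynomials.
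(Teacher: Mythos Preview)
Your proof is correct and follows essentially the same approach as the paper: apply Theorem~1 at parameter $r+s$, substitute the Cheon--Jung identity for $w_{q,r+s}(n,k)$, interchange the order of summation, and identify the inner sum as $c_j^q(s)$ via Theorem~1 again.
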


\begin{proof}
\begin{equation*}
 \begin{split}
c_n^q(r+s)&=\sum_{k=0}^{n}w_{q,r+s}(n,k)\:\frac{1}{k+1}\\
&=\sum_{k=0}^n \, \sum_{j=k}^n (-1)^{n-j}\,\binom {n}{j}\,[r|q]_{n-j}\,w_{q,s}(j,k)\, \frac{1}{k+1}\\
&=\sum_{j=0}^n \, \sum_{k=0}^j (-1)^{n-j}\,\binom{n}{j}\,[r|q]_{n-j}\,w_{q,s}(j,k)\, \frac{1}{k+1}\\
&=\sum_{j=0}^n (-1)^{n-j}\,\binom{n}{j}\,[r|q]_{n-j}\,c_j^q(s).
\end{split}
\end{equation*}
\end{proof}
\begin{remark}
For $s=0$, we get
\begin{equation}
c_n^q(r)=\sum_{j=0}^n (-1)^{n-j}\,\binom{n}{j}\,[r|q]_{n-j}\,c_j^q.
\end{equation}
For $q=1$, we get
\begin{equation}
c_n(r+s)=\sum_{j=0}^n (-1)^{n-j}\,\binom{n}{j}\,[r|1]_{n-j}\,c_j(s).
\end{equation}
\end{remark}
\begin{acknowledgement}
The author thank Prof. Istv\'an Mez\"o for reading carefully the paper and giving helpful suggestions.
\end{acknowledgement}

\end{document}